\newcommand{\ZZ}{\mathbb{Z}}
\newcommand{\V}{\mathcal{V}}
\newcommand{\A}{\mathbb{A}}
\theoremstyle{plain} \newtheorem{thm}{Theorem}[section]
\newtheorem{lem}[thm]{Lemma}
\theoremstyle{definition} \newtheorem{defn}[thm]{Definition}
\theoremstyle{remark} 
\newcommand{\m}[1]{\mathbb{#1}}   % for models
\numberwithin{equation}{section}  % number equations within sections
\newcommand{\T}{\mathcal{T}}
\newcommand{\B}{\mathbb{B}}
\newcommand{\Con}{\text{Con}}
\newcommand{\Cg}{\text{Cg}}
\newcommand{\Sg}{\text{Sg}}
\newcommand{\supp}{\text{supp}}
\title{The Variety Generated by $\A(\T)$ -- Two Counterexamples}
\author{Matthew Moore}
\date{\today}
\address{
  %(Matthew Moore)
  Vanderbilt University;
  Nashville, TN 37240;
  U.S.A.}
\email{matthew.moore@vanderbilt.edu}
\begin{document} \maketitle
\begin{abstract}
We show that $\V(\A(\T))$ does not have definable principal subcongruences
or bounded Maltsev depth. When the Turing machine $\T$ halts, $\V(\A(\T))$
is an example of a finitely generated semilattice based (and hence
congruence $\wedge$-semidistributive) variety with only finitely many
subdirectly irreducible members, all finite. This is the first known example
of a variety with these properties that does not have definable principal
subcongruences or bounded Maltsev depth.
\end{abstract}
%==========================================================================}}}1

%=========================================================================={{{1
\section{Introduction}
In $1976$, Park conjectured in \cite{ParkThesis} that every finitely
generated variety with a finite residual bound is finitely based. This
problem, known as Park's Conjecture, is still open. It has, however, been
proved with additional hypotheses.  Baker's Theorem \cite{BakerFiniteBasis}
establishes Park's Conjecture for congruence distributive varieties.
McKenzie's Theorem \cite{McKenzieFinEqBasis} establishes Park's Conjecture
for congruence modular varieties. Willard's Theorem
\cite{WillardFiniteBasisTheorem} establishes Park's Conjecture for
congruence $\wedge$-semidistributive varieties. The theorems of McKenzie and
Willard are more general than Baker's, but incomparable to one another.

Many proofs of Baker's Theorem are now known (see
\cite{JonssonFinitelyBased, MakkaiFiniteBasis, BakerWangDPSC}), and some of
the recent approaches involve simplifications and new concepts that may be
applicable to a wider class of varieties. In fact, in
\cite{WillardExtendingBaker, WillardFiniteBasisProblem} Willard specifically
asks:
\begin{enumerate}
  \item if $\A$ is finite of finite type and $\V(\A)$ has finite residual
  bound and is congruence $\wedge$-semidistributive, is it true that
  $\V(\A)$ has definable principal subcongruences? (See Definition
  \ref{defn:DPSC})

  \item if $\V$ is a congruence $\wedge$-semidistributive variety in a
  finite language and has finite residual bound, is it true that $\V$ has
  bounded Maltsev depth? (See Definition \ref{defn:bounded_Maltsev_depth})
\end{enumerate}
This paper answers both of these questions in the negative.

We examine the variety generated by McKenzie's $\A(\T)$ algebra, which
McKenzie uses in \cite{McKenzieResidualBoundNotComp} to prove that the
property of having a finite residual bound(=a finite bound on the size of
subdirectly irreducible algebras) is undecidable, and which Willard
\cite{WillardTarskisFiniteBasis} uses to give another proof that Tarski's
Finite Basis problem is undecidable. Recent work by the author in
\cite{MooreDPSCUndec} defines an algebra $\A'(\T)$ through the addition of a
new operation to $\A(\T)$. This new operation makes it possible to prove
that $\V(\A'(\T))$ has DPSC if and only if the Turing machine $\T$ halts,
and yields a third proof that Tarski's Finite Basis Problem is undecidable.
The question of whether or not the unmodified $\A(\T)$ generates a variety
with DPSC is left unaddressed in \cite{MooreDPSCUndec}, however, and is
answered here.

The question of whether the variety generated by the modified $\A(\T)$ used
in \cite{MooreDPSCUndec} has bounded Maltsev depth is intriguing. It appears
to be the case that $\V(\A'(\T))$ does not have bounded Maltsev depth when
$\T$ does not halt, so proving that it does when $\T$ halts would show that
the property of having bounded Maltsev depth is undecidable. The
straightforward approach to proving this would seem to require a different
sort of fine analysis of polynomials of $\A'(\T)$ than that used in
\cite{MooreDPSCUndec} to prove that the property of having DPSC is
undecidable.
%==========================================================================}}}1

%=========================================================================={{{1
\section{The Algebra $\A(\T)$}
The algebra $\A(\T)$ is quite complicated, and a full understanding of its
structure is not necessary for the results in this paper. We provide a full
definition for the completeness, however.

Define a \emph{Turing machine} $\T$ to be a finite list of $5$-tuples
$(s,r,w,d,t)$, called the \emph{instructions} of the machine, and
interpreted as ``if in state $s$ and reading $r$, then write $w$, move
direction $d$, and enter state $t$.'' The set of states is finite, $r,w\in
\{0,1\}$, and $d\in \{\text{L},\text{R}\}$. A Turing machine takes as input
an infinite bidirectional tape $\tau:\ZZ\to\{0,1\}$ which has finite
support. If $\T$ stops computation on some input, then $\T$ is said to have
\emph{halted} on that input. We say that the Turing machine halts (without
specifying the input) if it halts on the empty tape $\tau(x)=0$. Enumerate
the states of $\T$ as $\{\mu_0,\ldots,\mu_n\}$, where $\mu_1$ is the initial
(starting) state, and $\mu_0$ is the halting state.

Given a Turing machine $\T$ with states $\{\mu_0,\ldots,\mu_n\}$, we
associate to $\T$ an algebra $\A(\T)$. We will now describe the algebra
$\A(\T)$. Let
\[
  U = \{1,2,\text{H}\}, 
  \qquad \qquad 
  W = \{C,D,\partial C,\partial D\}, 
  \qquad \qquad 
  A = \{0\}\cup U\cup W,
\]
\[
  V_{ir}^s = \{C_{ir}^s, D_{ir}^s, M_i^r, \partial C_{ir}^s,
    \partial D_{ir}^s, \partial M_i^r\} 
  \qquad \text{for} \qquad 
  0\leq i\leq n \text{ and } \{r,s\}\subseteq \{0,1\},
\]
\[
  V_{ir} = V_{ir}^0 \cup V_{ir}^1,
  \qquad \qquad 
  V_i = V_{i0} \cup V_{i1}, 
  \qquad \qquad 
  V = \bigcup \{ V_i \mid 0\leq i\leq n \}.
\]
The underlying set of $\A(\T)$ is $A(\T) = A\cup V$. The ``$\partial$'' is
taken to be a permutation of order $2$ with domain $V\cup W$ (e.g.
$\partial\partial C = C$), and is referred to as ``bar''. It should be
mentioned that $\partial$ is \emph{not} an operation of $\A(\T)$. We now
describe the fundamental operations of $\A(\T)$. The algebra $\A(\T)$ is a
height $1$ meet semilattice with bottom element $0$:
\newcolumntype{A}{>{\centering\arraybackslash} m{.45\linewidth}}
\newcolumntype{B}{>{\centering\arraybackslash} m{.45\linewidth}}
\begin{center} \begin{tabular}{AB}
  $\displaystyle{ x\wedge y = \begin{cases}
    x & \text{if } x = y, \\
    0 & \text{otherwise.}
  \end{cases} }$
  &
  \begin{tikzpicture}[font=\small,smooth,node distance=1em]
    \node (x1) {$x_1$};
    \node (x2) [right=of x1] {$x_2$};
    \node (dots) [right=of x2] {$\cdots$};
    \node (zero) [below=of x2] {$0$};
    \draw (x1) -- (zero) (x2)--(zero) (dots) -- (zero);
  \end{tikzpicture}
\end{tabular} \end{center}
There is a binary nonassociative ``multiplication'', defined by
\begin{align*}
  & 2\cdot D = H\cdot C = D, & & 1\cdot C = C, \\
  & 2\cdot \partial D = H\cdot \partial C = \partial D, 
  & & 1\cdot \partial C = \partial C,
\end{align*}
and $x\cdot y = 0$ otherwise. Define
\begin{align*}
  J(x,y,z) = \begin{cases}
    x          & \text{if } x = y, \\
    x \wedge z & \text{if } x = \partial y, \\
    0          & \text{otherwise}, \end{cases}
  && &&
  J'(x,y,z) = \begin{cases}
    x \wedge z & \text{if } x = y, \\
    x          & \text{if } x = \partial y, \\
    0          & \text{otherwise}. \end{cases}
\end{align*}
Define
\begin{align*}
  S_0(u,x,y,z) & = \begin{cases}
    (x\wedge y) \vee (x\wedge z) & \text{if } u\in V_0, \\
    0 & \text{otherwise},
  \end{cases} \\
  S_1(u,x,y,z) & = \begin{cases}
    (x\wedge y) \vee (x\wedge z) & \text{if } u\in \{1,2\}, \\
    0 & \text{otherwise},
  \end{cases} \\
  S_2(u,v,x,y,z) & = \begin{cases}
    (x\wedge y) \vee (x\wedge z) & \text{if } u = \partial v\in V\cup W, \\
    0 & \text{otherwise}.
  \end{cases}
\end{align*}
Define
\[
  T(w,x,y,z) = \begin{cases}
    w\cdot x & \text{if } w\cdot x = y \cdot z \text{ and } (w,x)=(y,z), \\
    \partial(w\cdot x) & \text{if } w\cdot x = y\cdot z\neq 0 \text{ and } 
      (w,x)\neq (y,z), \\
    0 & \text{otherwise}.
  \end{cases}
\]

Next, we define operations that emulate the computation of the Turing
machine. First, we define an operation that when applied to certain elements
of $A(\T)^\ZZ$ will produce something that represents a ``blank tape'':
\[
  I(x) = \begin{cases}
    C_{10}^0 & \text{if } x = 1, \\
    M_1^0 & \text{if } x = \text{H}, \\
    D_{10}^0 & \text{if } x = 2, \\
    0 & \text{otherwise}.
  \end{cases}
\]
For each instruction of $\T$ of the form $(\mu_i,r,s,\text{L},\mu_j)$ and
each $t\in \{0,1\}$ define an operation
\[
  L_{irt}(x,y,u) = \begin{cases}
    C_{jt}^{s'} & \text{if } x = y = 1 \text{ and } u = C_{ir}^{s'} 
      \text{ for some } s', \\
    M_j^t & \text{if } x = \text{H}, y = 1, \text{ and } u = C_{ir}^t, \\
    D_{jt}^s & \text{if } x = 2, y = \text{H}, \text{ and } u = M_i^r, \\
    D_{jt}^{s'} & \text{if } x = y = 2 \text{ and } u = D_{ir}^{s'} 
      \text{ for some } s', \\
    \partial v & \text{if } u\in V \text{ and } 
      L_{irt}(x,y,\partial u) = v\in V \text{ by the above lines}, \\
    0 & \text{otherwise}.
  \end{cases}
\]
Let $\mathcal{L}$ be the set of all such operations. Similarly, for each
instruction of $\T$ of the form $(\mu_i,r,s,\text{R},\mu_j)$ and each $t\in
\{0,1\}$ define an operation
\[
  R_{irt}(x,y,u) = \begin{cases}
    C_{jt}^{s'} & \text{if } x = y = 1 \text{ and } u = C_{ir}^{s'} 
      \text{ for some } s', \\
    C_{jt}^s & \text{if } x = \text{H}, y = 1, \text{ and } u = M_i^r, \\
    M_j^t & \text{if } x = 2, y = \text{H}, \text{ and } u = D_{ir}^t, \\
    D_{jt}^{s'} & \text{if } x = y = 2 \text{ and } u = D_{ir}^{s'} 
      \text{ for some } s', \\
    \partial v & \text{if } u\in V \text{ and } 
      R_{irt}(x,y,\partial u) = v\in V \text{ by the above lines}, \\
    0 & \text{otherwise}.
  \end{cases}
\]
Let $\mathcal{R}$ be the set of all such operations. When applied to certain
elements from $\A(\T)^\ZZ$, these operations simulate the computation of
the Turing machine $\T$ on different inputs. Certain elements of
$\{1,2,H\}^{\ZZ}$ serve to track the position of the Turing machine's head
when operations from $\mathcal{L}\cup \mathcal{R}$ are applied to elements
of $\A(\T)^{\ZZ}$ that encode the contents of the tape. For this reason, we
define a binary relation $\prec$ on $\{1,2,\text{H}\}$ by $x\prec y$ if and
only if $x = y = 2$, or $x = 2$ and $y = \text{H}$, or $x = y = 1$. For
$F\in \mathcal{L}\cup \mathcal{R}$ note that $F(x,y,z) = 0$ except when
$x\prec y$.  Next we define two operations for each $F\in \mathcal{L}\cup
\mathcal{R}$,
\begin{align*}
  U_F^1(x,y,z,u) & = \begin{cases}
    \partial F(x,y,u) & \text{if } x\prec z, y\neq z, F(x,y,u)\neq 0, \\
    F(x,y,u) & \text{if } x\prec z, y = z, F(x,y,u)\neq 0, \\
    0 & \text{otherwise},
  \end{cases} \\
  U_F^0(x,y,z,u) & = \begin{cases}
    \partial F(y,z,u) & \text{if } x\prec z, x\neq y, F(y,z,u)\neq 0, \\
    F(y,z,u) & \text{if } x\prec z, x = y, F(y,z,u) \neq 0, \\
    0 & \text{otherwise}.
  \end{cases}
\end{align*}

The operations on $\A(\T)$ are
\[
  \{0, \wedge, (\cdot), J, J', S_0, S_1, S_2, T, I\} \cup \mathcal{L} \cup
  \mathcal{R} \cup \{U_F^1, U_F^2 \mid F\in \mathcal{L} \cup \mathcal{R} \}.
\]
$\left<A(\T); \wedge\right>$ is a height $1$ semilattice, so there is an
order on $A(\T)$ determined by this semilattice structure: $x\leq y$ if and
only if $x\in \{0,y\}$. All the operations of $\A(\T)$ are monotone with
respect to this order. That is, if $F(x_1,\ldots,x_n)$ is any operation of
$\A(\T)$ and $a_1,b_1,\ldots,a_n,b_n\in A(\T)$ then
\[
  (a_1,\ldots,a_n)\leq (b_1,\ldots,b_n)
  \qquad \qquad \text{implies} \qquad \qquad
  F(a_1,\ldots,a_n)\leq F(b_1,\ldots,b_n).
\]
In \cite{MooreDPSCUndec}, the author extends the language of the $\A(\T)$
algebra by adding the operation
\[
  K(x,y,z) = \begin{cases}
    y & \text{if } x = \partial y, \\
    z & \text{if } x = y = \partial z, \\
    x\wedge y\wedge z & \text{otherwise},  \end{cases}
\]
(this operation is also monotone with respect to $\leq$). The resulting
algebra is denoted by $\A'(\T)$. A very fine analysis of the polynomials of
the variety generated by this algebra proves that $\V(\A'(\T))$ has DPSC if
and only if the Turing machine $\T$ halts, thereby proving that the property
of having DPSC is in general undecidable.

The purpose of all these constructions is to prove the following theorem.
\begin{thm}
The following are equivalent.
\begin{enumerate}
  \item $\T$ halts,
  \item $\V(\A(\T))$ has finite residual bound (McKenzie
    \cite{McKenzieResidualBoundNotComp}),
  \item $\V(\A(\T))$ is finitely based (Willard
    \cite{WillardTarskisFiniteBasis}),
  \item  $\V(\A'(\T))$ has definable principal subcongruences (the author
    \cite{MooreDPSCUndec}).
\end{enumerate}
\end{thm}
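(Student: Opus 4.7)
The plan is to recognize that this theorem is not an isolated new result but a compilation of previously established equivalences, all pivoting on condition (1). So the strategy is simply to verify that the three cited works establish $(1)\Leftrightarrow (2)$, $(1)\Leftrightarrow (3)$, and $(1)\Leftrightarrow (4)$; the equivalences among (2), (3), and (4) then follow by transitivity through (1). Since $\A(\T)$ was engineered precisely so that its polynomials simulate the Turing machine $\T$, the halting status of $\T$ drives the structure of the subdirectly irreducible (SI) members of $\V(\A(\T))$, which in turn controls each of the desired properties.

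For $(1)\Leftrightarrow (2)$, I would invoke McKenzie's analysis in \cite{McKenzieResidualBoundNotComp}. The forward direction proceeds by showing that a finite SI in $\V(\A(\T))$ whose monolith encodes a Turing machine configuration can only grow to a size controlled by the length of the computation, so halting of $\T$ gives a uniform bound. The reverse direction builds, for each $n$, an SI whose monolith witnesses the $n$-th step of the computation of $\T$ on the blank tape (the operation $I$ is precisely what makes the ``blank tape'' available inside subalgebras), producing SIs of unbounded cardinality when $\T$ does not halt.

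For $(1)\Leftrightarrow (3)$, the cleanest route is to note that the semilattice structure on $\A(\T)$ makes $\V(\A(\T))$ congruence $\wedge$-semidistributive, so Willard's finite basis theorem \cite{WillardFiniteBasisTheorem} converts (2) into (3); conversely, the SI construction of the previous paragraph is enough to defeat any candidate finite basis on a sufficiently large SI, which is essentially the argument carried out in \cite{WillardTarskisFiniteBasis}. For $(1)\Leftrightarrow (4)$, I would cite \cite{MooreDPSCUndec}: the extra operation $K$ of $\A'(\T)$ admits a sharp polynomial analysis that, when $\T$ halts, yields a bounded first-order formula defining principal subcongruences in finitely generated members of $\V(\A'(\T))$, and when $\T$ does not halt, produces subcongruences of unbounded definitional complexity, ruling out DPSC.

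The main obstacle in reproving any of these equivalences from scratch is the delicate case analysis of the operations $J,J',S_0,S_1,S_2,T,I,\mathcal{L},\mathcal{R},U_F^0,U_F^1$ needed to track exactly which computations can and cannot be simulated inside subdirectly irreducible quotients. The present theorem sidesteps this entirely: each implication is already proved in the literature, so the proof is a bookkeeping exercise combining the four cited results.
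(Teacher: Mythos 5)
Your proposal is correct and takes essentially the same approach as the paper: this theorem is stated without proof, as a survey of the cited results of McKenzie, Willard, and Moore, each of which establishes the equivalence of its condition with (1), and the paper relies on those citations exactly as you do.
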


Congruence $\wedge$-semidistributivity is a generalization of congruence
distributivity, and turns out to be an important property for $\V(\A(\T))$.

%=========================================================================={{{
\begin{defn} \label{defn:cong_SD}
A class $\mathcal{C}$ of algebras is said to be \emph{congruence
$\wedge$-semidistributive} if the congruence lattice of each algebra in
$\mathcal{C}$ satisfies the $\wedge$-semidistributive law:
\[
  \left[ x\wedge y = x\wedge z \right] 
    \rightarrow \left[ x\wedge y = x\wedge (y\vee z) \right].
\]
\end{defn}
%==========================================================================}}}

Algebras $\A$ that generate congruence $\wedge$-semidistributive varieties
include those with a fundamental operation $\wedge$ such that $\left< A;
\wedge \right>$ is a semilattice. $\A(\T)$ is clearly such an algebra.
%==========================================================================}}}1

%=========================================================================={{{1
\section{$\V(\A(\T))$ Does Not Have DPSC or Bounded Maltsev Depth}

The properties of DPSC and bounded Maltsev depth are properties that each
subclass of the variety must possess. Exhibiting a subclass of the variety
that cannot have these properties therefore proves that the entire variety
cannot have these properties. We will now define such a subclass. Fix $n\geq
2$ and define elements of $\A(\T)^n$
\begin{align*}
  b_i & = (D,D,\ldots, \stackrel{i}{\hat{D}},0,\ldots,0), & 
    d_i & = (D,\ldots,D,\stackrel{i}{\hat{\partial D}},0,\ldots,0), \\
  c_i & = (0,D,\ldots,\stackrel{i}{\hat{D}},0,\ldots,0).
\end{align*}
Let $a = b_1$ and define
\[
  \B_n = \Sg^{\A(\T)^n} \left( \{ a, b_i, d_i \mid 2\leq i\leq n \} \right).
\]
Note that the only fundamental operations of $\A(\T)$ that are nonzero on
the generators (and hence on $B_n$) are $\wedge$, $J$, $J'$, and $S_2$. For
both DPSC and bounded Maltsev depth we will be examining the congruence
$\Cg^{\B_n}(a,0)$, but first we will give some useful properties of $B_n$.

%=========================================================================={{{
\begin{lem} \label{lem:B_n_properties}
If $x\in B_n$ then
\begin{enumerate}
  \item $x(1)\in \{0,D\}$ and $x(l)\in \{0,D,\partial D\}$

  \item there is at most one $l$ such that $x(l) = \partial D$,

  \item if $x(l) = \partial D$ then $x(k) = 0$ for all $k > l$, and

  \item if $x(l) = \partial D$ then either $x = d_l$ or there is $k<l$ such
  that $x(k) = 0$.
\end{enumerate}
(We take the index of the first coordinate to be $1$.)
\end{lem}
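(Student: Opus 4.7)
The plan is to prove all four statements simultaneously by induction on terms in the language of $\A(\T)$ that build elements of $B_n$ from the generators $a, b_i, d_i$. Every element of $B_n$ is the value at the generators of such a term, and as the paper notes immediately before the lemma, only the fundamental operations $\wedge$, $J$, $J'$, and $S_2$ can be nonzero when applied to tuples of elements of $B_n$; every other fundamental operation of $\A(\T)$ returns the constant $0$ on generator tuples (and hence on $B_n$), which trivially satisfies (1)--(4). So the induction reduces to checking that these four operations preserve the four properties.

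The base case follows by direct inspection of $a$, each $b_i$, and each $d_i$. For the inductive step I would work coordinatewise, since all fundamental operations of $\A(\T)$ are applied componentwise to tuples. At each coordinate, each of the four operations $\wedge$, $J$, $J'$, $S_2$ outputs a value lying in $\{0\} \cup \{\text{the inputs at that coordinate}\}$, which immediately gives (1); at coordinate $1$ the inductive values lie in $\{0, D\}$, and the gate condition of $S_2$ can never trigger there because $u(1) = \partial v(1) \in V \cup W$ would force $\{u(1),v(1)\} = \{D,\partial D\}$. Property (2) follows because in each of $\wedge$, $J$, $J'$, $S_2$, producing $\partial D$ at a coordinate $l$ of the output forces one specific input $p$ to satisfy $p(l) = \partial D$, so the inductive uniqueness of $l$ in $p$ transfers to the output. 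Property (3) follows the same way: with $p(l) = \partial D$ and $k > l$, induction yields $p(k) = 0$, which kills the output at coordinate $k$ in all four operations.

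The main obstacle is property (4), which requires a finer look at what happens to the left of the $\partial D$ coordinate. Suppose the output $x$ has $x(l) = \partial D$; trace this back to an input $p$ with $p(l) = \partial D$. By the inductive hypothesis applied to $p$, either $p = d_l$, or there exists $k < l$ with $p(k) = 0$. In the second case, a direct inspection of each of $\wedge$, $J$, $J'$, $S_2$ shows that $p(k) = 0$ forces $x(k) = 0$ (every nonzero branch in the operation's definition fails when the distinguished input equals $0$), so (4) holds for $x$. In the first case, $p(k) = D$ for every $k < l$, and at each such $k$ the output $x(k)$ is forced into $\{0,D\}$; if any value is $0$ then (4) holds, and otherwise $x(k) = D$ for all $k < l$, $x(l) = \partial D$, and $x(k) = 0$ for all $k > l$ by the already-established (3), so $x = d_l$. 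The subtlety that makes the case analysis nontrivial is that for $J$ and $J'$ there are two branches in the definition that can produce $\partial D$ in the output (the ``$x=y$'' branch and the ``$x = \partial y$'' branch), and one must verify that in both branches the same distinguished argument is forced to carry $\partial D$ at coordinate $l$; likewise for $S_2$ the interaction of the gate condition $u = \partial v$ with the location of $\partial D$ in $u$ or $v$ must be handled carefully, using (2) to prevent spurious activation.
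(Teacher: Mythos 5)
Your proof is correct and follows essentially the same route as the paper's: reduce to the four operations $\wedge$, $J$, $J'$, $S_2$, then argue that each preserves the stated properties because its output is dominated coordinatewise by one specific argument (the first argument for $\wedge$, $J$, $J'$, the third for $S_2$). Two remarks on how the paper packages this more cleanly. First, for item~(1) the paper simply observes that $\{0,D\}$ and $\{0,D,\partial D\}$ are subuniverses of $\A(\T)$ and that the generators project into them, avoiding the coordinatewise case check. Second, and more substantively, your case split in the proof of item~(4) is redundant: once (1)--(3) are known, (4) follows directly with no further induction or trace-back. Indeed, if $x(l)=\partial D$ then $l\geq 2$ by (1), $x(k)\in\{0,D\}$ for all $k<l$ by (1) and (2), and $x(k)=0$ for all $k>l$ by (3); so either some $x(k)=0$ with $k<l$, or $x(k)=D$ for all $k<l$ and hence $x=d_l$. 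Your ``first case'' argument is exactly this direct derivation, which makes the preceding trace-back to an input $p$ with $p(l)=\partial D$ and the appeal to the inductive hypothesis for $p$ unnecessary. One small imprecision worth tightening: the statement that each operation ``outputs a value lying in $\{0\}\cup\{\text{the inputs at that coordinate}\}$'' is weaker than what your (2) and (3) arguments actually use, which is that the output is $\leq$ a \emph{single fixed} argument across all coordinates; you do rely on this when you say ``one specific input $p$,'' so it would be better to state the inequalities $u\wedge v\leq u$, $J(u,v,w)\leq u$, $J'(u,v,w)\leq u$, $S_2(a,b,u,v,w)\leq u$ up front, as the paper does.
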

\begin{proof}
The first part of the first item is a consequence of that fact that
$\pi_1(\{a,b_i,d_i\mid 2\leq i\leq n\}) = \{0,D\}$ and $\{0,D\}$ is the
universe of a subalgebra of $\A(\T)$. The second part follows similarly.

For items $(2)$ and $(3)$, observe that only fundamental operations of
$\A(\T)$ that are nonzero on $B_n$ are $\wedge$, $J$, $J'$, and $S_2$. For
these operations, we have the following inequalities
\begin{align*}
  & u\wedge v \leq u, & 
    & J(u,v,w) \leq u, \\
  & J'(u,v,w) \leq u, &
    & S_2(a,b,u,v,w) \leq u.
\end{align*}
Since items $(2)$ and $(3)$ are true for the generators of $B_n$ and $B_n$
is height $1$ in each coordinate, these inequalities force items $(2)$ and
$(3)$ to also hold for the whole of $B_n$.

The last item, item $(4)$, follows from the previous items.
\end{proof}
%==========================================================================}}}

We will now proceed to show that $\V(\A(\T))$ does not have definable
principal subcongruences. We begin by defining what it means for an algebra
to have definable principal congruences (DPC) and definable principal
subcongruences (DPSC). A \emph{congruence formula} for a class $\mathcal{C}$
of algebras of the same type is a $4$-ary first order formula
$\psi(w,x,y,z)$ such that for all $\B\in \mathcal{C}$ and all $a,b,c,d\in
B$, if $\B\models \psi(c,d,a,b)$ then $(c,d)\in \Cg^{\B}(a,b)$. If $\psi$ is
such that $\B\models \psi(c,d,a,b)$ if and only if $(c,d)\in \Cg^{\B}(a,b)$,
then we say that $\psi(-,-,a,b)$ \emph{defines} $\Cg^{\B}(a,b)$.

%=========================================================================={{{
\begin{defn} \label{defn:DPC}
A class $\mathcal{C}$ of algebras of the same type is said to have
\emph{definable principal congruences (DPC)} if there is a congruence formula
$\psi$ such that for every $\B\in \mathcal{C}$ and every $a,b\in B$,
$\psi(-,-,a,b)$ defines $\Cg^{\B}(a,b)$.
\end{defn}
%==========================================================================}}}

Although the DPC property is quite useful, it is somewhat uncommon. A
weakening of definable principal congruences, called definable principal
subcongruences and introduced in \cite{BakerWangDPSC}, turns out to be much
more common, and still has many of the features that make DPC appealing.

%=========================================================================={{{
\begin{defn} \label{defn:DPSC}
A class $\mathcal{C}$ of algebras of the same type is said to have
\emph{definable principal subcongruences (DPSC)} if there are congruence
formulas $\Gamma$ and $\psi$ such that for every $\B\in \mathcal{C}$ and
every $a,b\in B$ with $a\neq b$, there exists $c,d\in B$ with $c\neq d$ and
such that $\B\models \Gamma(c,d,a,b)$ and $\psi(-,-,c,d)$ defines
$\Cg^{\B}(c,d)$.
\end{defn}
%==========================================================================}}}

\newcolumntype{A}{>{\centering\arraybackslash} m{.45\linewidth}}
\newcolumntype{B}{>{\centering\arraybackslash} m{.45\linewidth}}
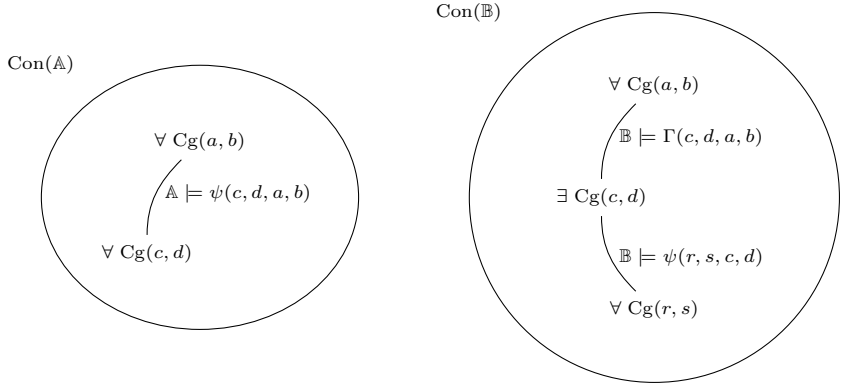
\begin{figure}[h] \begin{tabular}{AB}
  \begin{tikzpicture}[font=\scriptsize,smooth]
    \node (ab) {$\forall \; \Cg(a,b)$};
    \node (cd) [below=of ab,xshift=-2em] {$\forall \; \Cg(c,d)$};
    \coordinate (t1) at (ab|-cd);
    \coordinate (t1) at ($(t1) !0.5! (ab)$);
    \draw (t1) ellipse (6em and 5em)
      (ab) to[out=225,in=90] node[right]{$\A\models\psi(c,d,a,b)$} (cd);
    \node at (current bounding box.north west) {$\Con(\A)$};
  \end{tikzpicture}
  &
  \begin{tikzpicture}[font=\scriptsize,smooth]
    \node (ab) {$\forall \; \Cg(a,b)$};
    \node (cd) [below=of ab,xshift=-2em] {$\exists \; \Cg(c,d)$};
    \node (rs) [below=of cd,xshift=2em] {$\forall \; \Cg(r,s)$};
    \coordinate (t1) at ($(ab) !0.5! (rs)$);
    \draw (t1) ellipse (7em and 7em)
      (ab) to[out=225,in=90] node[right]{$\B\models\Gamma(c,d,a,b)$} (cd)
      (cd) to[out=270,in=135] node[right]{$\B\models\psi(r,s,c,d)$} (rs);
    \node at (current bounding box.north west) {$\Con(\B)$};
  \end{tikzpicture}
\end{tabular}
\caption{$\A$ has DPC via $\psi$, and $\B$ has DPSC via $\Gamma$ and
  $\psi$.}
\end{figure}

If $\mathcal{C}$ is a class with DPSC and $\B\in \mathcal{C}$, then every
nontrivial principal congruence of $\B$ must have a nontrivial
subcongruence that is defined by a fixed congruence formula. Observe that if
the principal congruence in question is atomic, then it is necessarily
definable since it has no proper nontrivial subcongruences. Thus, there is a
single fixed congruence formula that defines every atomic congruence of
every algebra in $\mathcal{C}$.

To show that the subclass consisting of all of the $\B_n$ algebras defined
above does not have DPSC, we will produce an atomic congruence of $\B_n$ for
each $n$, and show that there can be no congruence formula that defines all
of them when $n$ is sufficiently large.

%=========================================================================={{{
\begin{lem} \label{lem:(a,0)_atomic}
$\Cg^{\B_n}(a,0)$ is an atomic congruence of $\B_n$.
\end{lem}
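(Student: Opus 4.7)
The plan is to verify atomicity directly: for every pair $(x,y) \in \Cg^{\B_n}(a,0)$ with $x\ne y$, I will show that $\Cg^{\B_n}(x,y) = \Cg^{\B_n}(a,0)$, which is equivalent to saying $\Cg^{\B_n}(a,0)$ covers the trivial congruence. The strategy is to exhibit a single unary polynomial that returns such a pair to $(a,0)$.

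First I would pin down the shape of a generic pair in $\Cg^{\B_n}(a,0)$. Since $\B_n$ is a subalgebra of $\A(\T)^n$, every operation --- and hence every unary polynomial $f$ of $\B_n$ --- acts coordinatewise. Because $a$ and $0$ agree in every coordinate $j\geq 2$, this forces $f(a)(j) = f(0)(j)$ for all $j\geq 2$. Combining this observation with the Maltsev chain characterization of principal congruences, the agreement propagates through transitivity: every pair in $\Cg^{\B_n}(a,0)$ consists of elements that coincide outside coordinate $1$. Together with Lemma \ref{lem:B_n_properties}(1), which restricts first-coordinate values in $\B_n$ to $\{0,D\}$, this lets me assume without loss of generality that $x(1) = D$, $y(1) = 0$, and $x(j) = y(j)$ for all $j\geq 2$.

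Now I apply the unary polynomial $p(z) = z\wedge a$, which is available because $a \in B_n$. Using the height-$1$ semilattice structure on $A(\T)$ (so $D\wedge D = D$ and $u\wedge 0 = 0$ for $u\neq D$) coordinatewise, I compute $p(x) = (D,0,\ldots,0) = a$ and $p(y) = (0,0,\ldots,0) = 0$. Therefore $(a,0) \in \Cg^{\B_n}(x,y)$, and together with $(x,y) \in \Cg^{\B_n}(a,0)$ this yields $\Cg^{\B_n}(x,y) = \Cg^{\B_n}(a,0)$. The only step requiring real care is the first --- making certain that the Maltsev chain argument genuinely forces pairs in $\Cg^{\B_n}(a,0)$ to agree off coordinate $1$ --- after which the explicit meet with $a$ concludes the proof in a single line.
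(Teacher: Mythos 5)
Your proof is correct and follows essentially the same route as the paper: observe that unary polynomials of a subalgebra of $\A(\T)^n$ act coordinatewise so any pair in $\Cg^{\B_n}(a,0)$ agrees off coordinate $1$, invoke Lemma~\ref{lem:B_n_properties}(1) to pin the first coordinates down to $\{0,D\}$, and then meet with $a$ to recover the pair $(a,0)$. The paper simply states ``it follows that $\{u(1),v(1)\}=\{0,D\}$'' and leaves the Maltsev-chain propagation implicit; you spell it out, which is fine. (Minor slip in wording: the relevant semilattice facts are $D\wedge D=D$ and $u\wedge v=0$ whenever $u\neq v$, not ``$u\wedge 0=0$ for $u\neq D$''; your actual computation is nevertheless correct.)
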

\begin{proof}
Suppose that $u\neq v$ and $(u,v)\in \Cg^{\B_n}(a,0)$. From Lemma
\ref{lem:B_n_properties}, we have that $\pi_1(B_n) = \{0,D\}$. Since
$a(1)\neq 0$ and $a(i) = 0$ for $i\geq 2$, it follows that $\{u(1),v(1)\} =
\{0,D\}$, so $\{u\wedge a, v\wedge a\} = \{0, a\}$, and thus
$\Cg^{\B_n}(a,0) \subseteq \Cg^{\B_n}(u,v)$. Therefore $\Cg^{\B_n}(a,0)$ is
atomic, as claimed.
\end{proof}
%==========================================================================}}}

%=========================================================================={{{
\begin{lem} \label{lem:(bn,cn)_in_(a,0)}
$(b_n,c_n)\in \Cg^{\B_n}(a,0)$.
\end{lem}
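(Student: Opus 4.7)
The plan is to exhibit a single unary polynomial $p$ of $\B_n$ with $p(a) = b_n$ and $p(0) = c_n$; since congruences are compatible with polynomial operations, this immediately gives $(b_n,c_n) \in \Cg^{\B_n}(a,0)$.

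The key idea is to use the operation $J'$ together with the generator $d_k$ to ``add a $D$ coordinate at a time''. The second clause of $J'$ returns $x$ when $x = \partial y$, which lets $J'(b_n, d_k, \cdot)$ force coordinate $k$ of the output to equal $D$ (since $d_k(k) = \partial D$ and $b_n(k) = D$). Concretely, for each $k$ with $2 \le k \le n$, define
\[
  f_k(x) \;:=\; J'(b_n,\, d_k,\, x).
\]
A direct case check of $J'$ at each coordinate $j$, reading off $b_n(j) = D$ together with $d_k(j) \in \{D,\partial D,0\}$, shows that $f_k$ acts by meet with $D$ at coordinates $j < k$ (first clause of $J'$), returns $D$ at coordinate $k$ regardless of $x(k)$ (second clause), and returns $0$ at coordinates $j > k$ (third clause, since $\partial 0$ is undefined).

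Let $u_k := (D,\ldots,D,0,\ldots,0)$ be the element with $D$ in coordinates $1$ through $k$, and let $v_k := (0,D,\ldots,D,0,\ldots,0)$ be the element with $D$ in coordinates $2$ through $k$; thus $u_1 = a$, $v_1 = 0$, $u_n = b_n$, and $v_n = c_n$. The computation above yields $f_k(u_{k-1}) = u_k$ and $f_k(v_{k-1}) = v_k$ for each $k$, so the iterated polynomial $p := f_n \circ f_{n-1} \circ \cdots \circ f_2$ satisfies $p(a) = b_n$ and $p(0) = c_n$. Applying $p$ to the generating pair $(a,0)$ of $\Cg^{\B_n}(a,0)$ then proves the lemma.

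I do not anticipate any serious obstacle beyond identifying the right polynomial. The one point worth verifying is that the constraint from Lemma \ref{lem:B_n_properties}(2) — at most one coordinate of any element of $B_n$ equals $\partial D$ — does not obstruct the construction. It does not, because I use a different generator $d_k$ (each carrying its single $\partial D$ at a different coordinate) at each step of the iteration, rather than attempting to assemble one element of $B_n$ with several $\partial D$ entries.
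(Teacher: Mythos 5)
Your proof is correct and follows essentially the same route as the paper's: the paper establishes $J'(b_l,d_l,b_{l-1}) = b_l$ and $J'(b_l,d_l,c_{l-1}) = c_l$ for $2 \le l \le n$ and iterates, which is precisely your chain once you observe that your $u_k$ and $v_k$ are the paper's $b_k$ and $c_k$. The only cosmetic difference is that you fix $b_n$ as the first argument of $J'$ at every step rather than letting it vary as $b_l$; both work because the coordinates beyond $l$ fall into the ``otherwise'' branch of $J'$ and return $0$ either way.
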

\begin{proof}
We have
\begin{align*}
  J'(b_2,d_2,a) = b_2, && && J'(b_2,d_2,0) = c_2, \\
  J'(b_l,d_l,b_{l-1}) = b_l, && && J'(b_l,d_l,c_{l-1}) = c_l
\end{align*}
for all $2\leq l\leq n$. The conclusion follows immediately.
\end{proof}
%==========================================================================}}}

%=========================================================================={{{
\begin{lem} \label{lem:f(a)=b_f(0)=c}
If $\m{C}\leq \B_n$, $a,b_n,c_n\in C$ and $f(x)$ is a polynomial of
$\m{C}$ such that $f(a) = b_n \neq f(0)$, then $f(0) = c_n$.
\end{lem}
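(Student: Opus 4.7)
The plan is to exploit two features of $\B_n \leq \A(\T)^n$: operations act coordinatewise, and every polynomial of $\m{C}$ is monotone with respect to the semilattice order $\leq$ (since every fundamental operation of $\A(\T)$ is). First I would write $f(x) = t(x,e_1,\ldots,e_k)$ for some term $t$ in the language of $\A(\T)$ and parameters $e_j\in C$. Because $\m{C}\leq \A(\T)^n$, for every $y\in C$ and every coordinate $i$,
\[
  f(y)(i) \;=\; t^{\A(\T)}\bigl(y(i),\,e_1(i),\,\ldots,\,e_k(i)\bigr).
\]

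The key observation is that $a = b_1 = (D,0,\ldots,0)$ and the bottom element $0 = (0,\ldots,0)$ of $\B_n$ agree in every coordinate except the first: $a(i)=0=0(i)$ for all $i\geq 2$. Plugging this into the formula above feeds \emph{identical} inputs into $t^{\A(\T)}$ at each coordinate $i\geq 2$, so $f(a)(i)=f(0)(i)$ for those $i$. Since $f(a)=b_n=(D,D,\ldots,D)$, this forces $f(0)(i)=D$ for every $i\geq 2$.

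It remains to pin down $f(0)(1)$. Monotonicity of $f$ together with $0\leq a$ gives $f(0)\leq f(a)=b_n$, so in particular $f(0)(1)\in\{0,D\}$. If $f(0)(1)=D$ then combined with the previous paragraph we would have $f(0)=b_n=f(a)$, contradicting the hypothesis $f(a)\neq f(0)$. Hence $f(0)(1)=0$, and therefore $f(0)=(0,D,D,\ldots,D)=c_n$.

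I do not anticipate a real obstacle here: the whole argument is essentially forced by the coordinatewise action of operations in the power $\A(\T)^n$ combined with the hypothesis that $a$ and $0$ differ only at coordinate~$1$; in particular the parameters $\bar e$ and the detailed structure of $\B_n$ (Lemma~\ref{lem:B_n_properties}) are never actually needed beyond the general fact that polynomials are monotone.
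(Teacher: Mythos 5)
Your proof is correct and follows essentially the same route as the paper: both argue coordinatewise that $a$ and $0$ agree on all coordinates $i\geq 2$, forcing $f(0)(i)=f(a)(i)=D$ there, and then show $f(0)(1)=0$. The only small difference is in how $f(0)(1)\in\{0,D\}$ is justified: you use monotonicity of polynomials (so $f(0)\leq f(a)=b_n$), whereas the paper cites Lemma~\ref{lem:B_n_properties} part (1); both are valid, and your remark that the structural lemma is avoidable for this particular step is accurate.
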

\begin{proof}
We have that $a(l) = 0$ for all $l\geq 2$. Therefore for all $l\geq 2$,
\[
  D = b_n(l) = f(a)(l) = f(a(l))=f(0(l)) = f(0)(l),
\]
so $f(0)(l) = D$ for all $l\geq 2$. Since $f(0)\neq b_n$, by Lemma
\ref{lem:B_n_properties} it must be that $f(0)(1) = 0$. Thus $f(0) = c_n$.
\end{proof}
%==========================================================================}}}

The next lemma makes use of the fact that $J$ and $J'$ are $0$-absorbing in
their first and second variables. An operation $F(x_1,\ldots x_n)$ is said
to be \emph{$0$-absorbing in the m-th variable} if 
\[
  F(x_1,\ldots,\stackrel{m}{\hat{0}},\ldots, x_n) \approx 0
\]
holds.

%=========================================================================={{{
\begin{lem} \label{lem:(bn,cn)_not_in_(a,0)}
$(b_n,c_n)\not\in\Cg^{\m{C}}(a,0)$ for any $\m{C}\lneq \B_n$.
\end{lem}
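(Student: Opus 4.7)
The plan is to derive a contradiction from the hypothesis $(b_n,c_n)\in\Cg^{\m{C}}(a,0)$ by reducing via Mal'cev's chain characterization to a single polynomial step and then showing the witnessing polynomial forces $\m{C}$ to contain every generator of $\B_n$. I would begin by assuming $a,0,b_n,c_n\in C$ (otherwise the claim is vacuous). Mal'cev's lemma furnishes a sequence $b_n=e_0,e_1,\ldots,e_k=c_n$ in $C$ and unary polynomials $f_1,\ldots,f_k$ of $\m{C}$ with $\{f_i(a),f_i(0)\}=\{e_{i-1},e_i\}$ for each $i$. Since $a(\ell)=0=0(\ell)$ for all $\ell\geq 2$, every polynomial of $\B_n$ satisfies $f(a)(\ell)=f(0)(\ell)$ at those coordinates; consequently each consecutive pair $e_{i-1},e_i$ agrees on coordinates $\ell\geq 2$, and so does every $e_i$ with $b_n$, forcing $e_i(\ell)=D$ for $\ell\geq 2$. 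Lemma~\ref{lem:B_n_properties} then pins each $e_i$ down to $\{b_n,c_n\}$, so some $f_i$ (call it $f$) has $\{f(a),f(0)\}=\{b_n,c_n\}$; after possibly interchanging the names $b_n$ and $c_n$ in the analysis, assume $f(a)=b_n$ and $f(0)=c_n$, and write $f(x)=p(x,c_1,\ldots,c_m)$ with $c_i\in C$ and $p$ a term in the language of $\A(\T)$.

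In the second step I would study $p$ coordinate-by-coordinate. At coordinate $1$ the specialization $p(-,c_1(1),\ldots,c_m(1))$ must act as the identity on $\{0,D\}$; at each coordinate $\ell\geq 2$ the specialization $p(-,c_1(\ell),\ldots,c_m(\ell))$ must be constantly $D$ on $\{0,D\}$. Within the subalgebra $\{0,D\}$ of $\A(\T)$, the $0$-absorption of $\wedge$, $J$, $J'$, and $S_2$ in their first two arguments implies that any term in which $x$ genuinely occurs evaluates to $0$ at $x=0$, while a term in which $x$ does not occur is constant in $x$ and cannot implement the identity at coordinate $1$. Since the same term $p$ must produce both behaviors, for each coordinate $\ell\in\{2,\ldots,n\}$ at least one of the constants $c_1,\ldots,c_m$ must have a $\partial D$ entry at coordinate $\ell$, and the term must use the ``$u=\partial v$'' clause of a $J$, $J'$, or $S_2$ node there in order to flip what would otherwise be a zero output into $D$. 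Lemma~\ref{lem:B_n_properties}(2) then forces these $\partial D$-carrying constants to be pairwise distinct across coordinates $\ell\in\{2,\ldots,n\}$.

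Finally I would combine Lemma~\ref{lem:B_n_properties}(3)--(4) with a descending induction on $\ell$ from $n$ down to $2$ to identify the constants themselves. At each $\ell$, the constant $c_i\in C$ carrying $\partial D$ at coordinate $\ell$ is either $d_\ell$ itself or (by item~(4)) has a zero in some earlier coordinate; its partner constant $c_j$ at the flipping operation must carry $D$ at coordinate $\ell$, and compatibility with the identity requirement at coordinate $1$ forces both constants to extend cleanly down to coordinate $1$ as well. Tracking these compatibility conditions inductively would force both $b_\ell$ and $d_\ell$ into $C$ for every $\ell\in\{2,\ldots,n\}$. Since $\B_n$ is generated by $\{a\}\cup\{b_\ell,d_\ell\mid 2\leq\ell\leq n\}$, we then obtain $\m{C}=\B_n$, contradicting $\m{C}\lneq\B_n$. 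The principal obstacle lies in this last step: an arbitrary witness $f$ need not be the neat nested composition $J'(b_n,d_n,J'(b_{n-1},d_{n-1},\ldots J'(b_2,d_2,x)\ldots))$ used in the proof of Lemma~\ref{lem:(bn,cn)_in_(a,0)}, so the inductive peeling must identify, within an arbitrary term tree, the ``$\partial$-flipping'' node responsible for each coordinate $\ell$ and the specific constants it forces into $\m{C}$; this is where the bulk of the technical work lies.
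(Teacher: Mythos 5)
Your proposal and the paper share the same opening move (Mal'cev chains plus the hypothesis $a(\ell)=0(\ell)$ for $\ell\geq 2$ reduce the problem to a single polynomial step with $f(a)=b_n$ and $f(0)=c_n$; this is essentially the content of Lemma~\ref{lem:f(a)=b_f(0)=c}), but after that the two arguments diverge substantially. The paper proves, by induction on the \emph{term structure} of $f$, that every polynomial of $\m{C}$ with $f(a)\neq f(0)$ has a zero in some coordinate; the only nontrivial case is an outermost $J'(g_1(x),g_2(x),g_3(x))$ with $g_1,g_2$ constant and nowhere zero, at which point Lemma~\ref{lem:B_n_properties} pins those constants down to $\{b_n,d_n\}$ and the argument recurses into $g_3$. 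You instead propose a \emph{coordinate-by-coordinate} analysis: the same term must be the identity at coordinate $1$ and $0\mapsto D$ at each coordinate $\ell\geq 2$, and you argue that each such coordinate requires a constant with a $\partial D$ entry there.

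That first coordinate-wise observation is defensible, though your stated justification is imprecise: $J$ is not $0$-absorbing in its third slot on $\{0,D\}$ ($J(D,D,0)=D$), so the correct form of the claim is that any unary polynomial of $\m{C}$, restricted to a fixed coordinate with all parameters in $\{0,D\}$, is either constant in $x$ or sends $0$ to $0$. That dichotomy is what forces a $\partial D$ entry somewhere when the output must be $D$ at $x=0$, and Lemma~\ref{lem:B_n_properties}(2) does give you distinct witnessing parameters across the coordinates $\ell\in\{2,\dots,n\}$, as you say.

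The genuine gap is in the final step, and you acknowledge it yourself. Knowing that for each $\ell$ \emph{some} parameter $c_{i_\ell}\in C$ satisfies $c_{i_\ell}(\ell)=\partial D$ does not by itself force $d_\ell\in C$: Lemma~\ref{lem:B_n_properties}(4) only gives $c_{i_\ell}=d_\ell$ \emph{or} $c_{i_\ell}(k)=0$ for some $k<\ell$, and you have not ruled out the second alternative. Moreover the ``flipping node'' responsible for the flip at coordinate $\ell$ need not have the raw parameters $c_i,c_j$ as its first two arguments --- they may be proper subterms, and the subterm responsible for one coordinate may be different from the subterm responsible for another. Your sketch of a ``descending induction'' and ``compatibility with the identity requirement at coordinate $1$'' gestures at resolving this, but the actual mechanism that pins the relevant subterm values to $\{b_n,d_n\}$ (and then, recursively, to $\{b_{n-1},d_{n-1}\}$, etc.) is the entire content of the paper's induction in the $J'$ case, and it is missing here. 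So while the coordinate-wise framing is a legitimately different decomposition, as written the proof does not go through; to complete it you would end up re-deriving something very close to the paper's structural induction on the term tree.
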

\begin{proof}
We will use the notation $[i,j]$ to mean the set of those $l\in \ZZ$ such
that $i\leq l\leq j$.  If $\m{C}\lneq \B_n$, then $C$ must omit some of the
generators of $\B_n$. The only generators of $\B$ that $\m{C}$ could
possibly omit are of the form $b_i$ and $d_k$ for some $i\neq n$ and any
$k$. Since $J(b_n,d_k,b_n) = b_k$, if $b_k\not\in C$ then, $d_k\not\in C$.
Thus, we need only consider the case when $d_k\not\in C$. We will show that
if $f(x)$ is a polynomial of $\m{C}$ and $f(a)\neq f(0)$ then there is some
$l\in [1,n]$ such that $f(a)(l) = 0$. The proof shall be by induction on the
complexity of $f(x)$. For $f(x) = x$, the claim clearly holds. Assume now
that the claim holds for all polynomials of complexity less than $f(x)$.  If
$f(x)$ is the result of applying $S_2$ to other polynomials, then by Lemma
\ref{lem:B_n_properties} part $(1)$ and the definition of $S_2$, $f(a)(1) =
f(0)(1) = 0$, so $f(a) = f(0)$. The case where $f(x)$ is the result of the
application of $\wedge$ to two polynomials is also straightforward.

Suppose that $f(x) = J(g_1(x),g_2(x),g_3(x))$. If $g_1(a)\neq g_1(0)$ or
$g_2(a)\neq g_2(0)$, then by the inductive hypothesis $g_1(a)(l) = g_1(0)(l)
= 0$ for some $l\in [1,n]$ or $g_2(a)(l) = g_2(0)(l) = 0$ for some $l\in
[1,n]$. Since $J$ is $0$-absorbing in its first and second variables, this
implies that $f(a)(l) = f(0)(l) = 0$ for some $l\in[1,n]$, as desired.
Assume now that $g_1(a) = g_1(0) = \alpha$ and $g_2(a) = g_2(0) = \beta$.
Then $f(a)\neq f(0)$ implies $\alpha(1) = \partial \beta(1)$, by the
definition of $J$ and $a$.  This contradicts Lemma \ref{lem:B_n_properties}
part $(1)$.

Suppose now that $f(x) = J'(g_1(x),g_2(x),g_3(x))$. If $g_1(a)\neq g_1(0)$
or $g_2(a)\neq g_2(0)$, then by the inductive hypothesis $g_1(a)(l) =
g_1(0)(l) = 0$ for some $l\in [1,n]$ or $g_2(a)(l) = g_2(0)(l) = 0$ for some
$l\in [1,n]$.  Since $J'$ is $0$-absorbing in its first and second
variables, this implies that $f(a)(l) = f(0)(l) = 0$ for some $l\in [1,n]$,
as desired. Assume now that $g_1(a) = g_1(0) = \alpha$ and $g_2(a) = g_2(0)
= \beta$. If $\alpha(l) = 0$ or $\beta(l) = 0$, then $f(a)(l) = 0$, so
assume that $\alpha$ and $\beta$ are nowhere $0$. If $f(a)(l) = 0$, then the
conclusion of the polynomial induction clearly holds, so also assume that
$f(a)$ is nowhere $0$. By Lemma \ref{lem:B_n_properties}, this implies that
$\alpha,\beta,f(a)\in \{b_n,d_n\}$. If $f(a)\neq f(0)$, then it must be that
$g_3(a)\neq g_3(0)$, so by the inductive hypothesis there is some $l\in
[1,n]$ such that $g_3(a)(l) = 0$. From the definition of $J'$, it must
therefore be that $\alpha(l) = \partial \beta(l)$, and since
$\alpha,\beta\in \{b_n,d_n\}$, from the definition of $b_n$ and $d_n$ we
have $l = n$. At this point, if $k = n$ (i.e. $d_n\not\in C$), then we would
have a contradiction, so it must be that $k < n$. It follows then that
$g_3(a) \in \{b_{n-1}, d_{n-1} \}$. Applying the exact same argument as
above to $g_3(x)$, replacing $l\in [1,n]$ with $l\in [1,n-1]$, we conclude
that $k < n-1$. Continuing in this manner, we see that there can be no $k$
such that $d_k\not\in C$, which contradicts our original assumption that $C$
omits some generator of $\B$.

This completes the induction on the complexity of polynomials, so we now
have that if $f(x)$ is a polynomial such that $f(a)\neq f(0)$, then there is
some $l$ such that $f(a)(l) = 0$. In particular, since $b_n\in B_n$ is
nowhere $0$, this means that the congruence class of $b_n$ is trivial, and
cannot contain $c_n$.
\end{proof}
%==========================================================================}}}

%=========================================================================={{{
\begin{thm} \label{thm:no_dpsc}
$\V(\A(\T))$ does not have DPSC.
\end{thm}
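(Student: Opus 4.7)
The plan is a locality argument via existential witnesses. Suppose for contradiction that $\V(\A(\T))$ has DPSC through congruence formulas $\Gamma$ and $\psi$; by the standard Maltsev-chain reformulation we may take both to be primitive positive, with a uniform bound $m$ on the number of existentially bound variables. For each $n\geq 2$, Lemma \ref{lem:(a,0)_atomic} gives that $\Cg^{\B_n}(a,0)$ is atomic, so DPSC produces $c',d'\in B_n$ with $c'\neq d'$, $\B_n\models \Gamma(c',d',a,0)$, and $\psi(-,-,c',d')$ defining $\Cg^{\B_n}(c',d')$. By atomicity, $\Cg^{\B_n}(c',d') = \Cg^{\B_n}(a,0)$, which contains $(b_n,c_n)$ by Lemma \ref{lem:(bn,cn)_in_(a,0)}, so $\B_n\models \psi(b_n,c_n,c',d')$.

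Next, collect into a set $W\subseteq B_n$ of size at most $2m+6$ the elements $a,0,b_n,c_n,c',d'$ together with the existential witnesses for both $\Gamma(c',d',a,0)$ and $\psi(b_n,c_n,c',d')$, and set $\m{C} = \Sg^{\B_n}(W)$. Because both formulas are positive existential, their witnesses lie in $\m{C}$, so $\m{C}\models \Gamma(c',d',a,0)$ and $\m{C}\models \psi(b_n,c_n,c',d')$. The congruence-formula property, applied inside $\m{C}\in \V(\A(\T))$, yields $(c',d')\in \Cg^{\m{C}}(a,0)$ and $(b_n,c_n)\in \Cg^{\m{C}}(c',d')\subseteq \Cg^{\m{C}}(a,0)$. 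By Lemma \ref{lem:(bn,cn)_not_in_(a,0)} this forces $\m{C} = \B_n$.

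To finish I count $\partial D$ coordinates. Each of the four operations $\wedge, J, J', S_2$ that acts nontrivially on $B_n$ satisfies an inequality of the form $F(u_1,\ldots,u_k)\leq u_j$ for a fixed $j$ (the first argument for $\wedge, J, J'$, the third for $S_2$), so the value $\partial D$ at position $l$ in any output requires $\partial D$ at position $l$ already in that designated argument. Induction on term complexity then shows that every element of $\m{C}$ bearing $\partial D$ at position $l$ traces back to some generator in $W$ with $\partial D$ at position $l$. By Lemma \ref{lem:B_n_properties}(2), each element of $W$ has at most one such position, so at most $|W|\leq 2m+6$ positions can ever be populated with $\partial D$ in $\m{C}$. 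But $\m{C} = \B_n$ contains $d_2,\ldots,d_n$, exhibiting $n-1$ distinct such positions, so choosing $n > 2m+7$ contradicts $\m{C} = \B_n$. The main obstacle beyond the bookkeeping is the $\partial D$-propagation claim, which is routine once one notes that each relevant operation outputs $0$ or the value of its designated argument in each coordinate.
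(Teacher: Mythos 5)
Your proof is correct, and its skeleton matches the paper's: use Lemma~\ref{lem:(a,0)_atomic} and the DPSC formulas to conclude that a uniformly bounded set of witnesses must generate a subalgebra $\m{C}\leq \B_n$ with $(b_n,c_n)\in\Cg^{\m{C}}(a,0)$, then invoke Lemma~\ref{lem:(bn,cn)_not_in_(a,0)} to force $\m{C}=\B_n$, and finally observe that $\B_n$ cannot be generated by a bounded number of elements. Where you genuinely add something is the last step: the paper simply asserts that ``the minimum number of generators of $\B_n$ goes to infinity as $n$ does'' without proof, whereas you supply a concrete lower bound via the $\partial D$-propagation argument. The observation that each of $\wedge$, $J$, $J'$, $S_2$ has a designated argument $u_j$ with $F(\ldots)\leq u_j$, so that a $\partial D$ at coordinate $l$ in any element of $\Sg(W)$ must already appear at coordinate $l$ in some element of $W$, combined with Lemma~\ref{lem:B_n_properties}(2), cleanly gives $|W|\geq n-1$ since $d_2,\ldots,d_n$ occupy $n-1$ distinct $\partial D$-coordinates. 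This fills the one unjustified step in the paper's proof; the rest (the reduction to primitive positive congruence formulas, the preservation of pp formulas in the subalgebra generated by the witnesses, the use of atomicity to get $\Cg^{\B_n}(c',d')=\Cg^{\B_n}(a,0)$) is standard and handled the same way both times, with your treatment carrying $\Gamma$ and $\psi$ separately while the paper compresses them into a single formula via the remark following Definition~\ref{defn:DPSC}.
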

\begin{proof}
If $\V(\A(\T))$ did have DPSC, then there would be a congruence formula
$\psi(w,x,y,z)$ such that for any algebra in $\V(\A(\T))$, $\psi$ defines
every atomic congruence of that algebra. Since $\B_n \in \V(\A(\T))$ for all
$n$, in particular by Lemma \ref{lem:(a,0)_atomic}, this means that the
congruence $\Cg^{\B_n}(a,0)$ is definable. By Lemma
\ref{lem:(bn,cn)_in_(a,0)}, $(b_n,c_n)\in \Cg^{\B_n}(a,0)$. Therefore there
is some number $N$ (depending only on $\V(\A(\T))$) such that $(b_n,c_n)\in
\Cg^{\B_n}(a,0)$ implies $(b_n,c_n)\in \Cg^{\m{C}}(a,0)$ for some subalgebra
$\m{C}$ of $\B_n$ with at most $N$ generators. Lemma
\ref{lem:(bn,cn)_not_in_(a,0)} states, however, any such $\m{C}$ must
actually be equal to $\B_n$, and since the minimum number of generators of
$\B_n$ goes to infinity as $n$ does, this is a contradiction. Thus
$\V(\A(\T))$ cannot have DPSC.
\end{proof}
%==========================================================================}}}

In the algebra $\A'(\T)$ from \cite{MooreDPSCUndec}, the $K$ operation could
be used to produce an element $b_n'$ such that $b_n(i) = \partial b_n'(i)$
for $i \geq 2$:
\begin{align*}
  b_2' = d_n, 
  && &&
  b_{k+1}' = K(b_n, b_k', d_{n-(k-1)}).
\end{align*}
This $b_n'$ can then be used to witness $(b_n,c_n)\in \Cg^{\B_n}(a,0)$ via
the polynomial $\lambda(x) = J'(b_n,b_n',x)$. That is, $\lambda(a) = b_n$
and $\lambda(0) = c_n$. The element $b_n'$ is also a counterexample to Lemma
\ref{lem:B_n_properties}, which is used heavily in the above proofs.
$\V(\A(\T))$ fails to have DPSC for any $\T$, but the addition of the $K$
operation links DPSC to the halting status of the Turing machine $\T$.

Next, we prove that $\V(\A(\T))$ does not have bounded Maltsev depth. From
Maltsev's description of principal congruences, we have $(c,d)\in \Cg(a,b)$
if and only if there are elements $c = r_1, r_2, \ldots, r_n = d$ and unary
polynomials $\lambda_1(x), \ldots, \lambda_{n-1}(x)$ such that
$\{\lambda_i(a), \lambda_i(b) \} = \{r_i, r_{i+1}\}$. The property of
bounded Maltsev depth (introduced in \cite{BakerWangApproxDistLaws}) is
motivated by the observation that in a congruence distributive variety
generated by a finite algebra, there is a bound $M$ such that it is
sufficient in the above description of principal congruences to only
consider those $\lambda_i(x)$ can all be taken to be compositionally
generated by at most $M$ fundamental translations (a fundamental translation
is a unary polynomial that is the result of fixing all but one variable in a
fundamental operation).

%=========================================================================={{{
\begin{defn} \label{defn:bounded_Maltsev_depth}
Let $M$ be a natural number. A class $\mathcal{C}$ of algebras of the same
type is said to have \emph{Maltsev depth $M$} if for every $\A\in
\mathcal{C}$ and every $a,b,c,d\in A$ such that $(c,d)\in \Cg^{\A}(a,b)$
there are elements $c = r_1, r_2, \ldots, r_n = d$ and unary polynomials
$\lambda_1(x), \ldots \lambda_{n-1}(x)$ such that
\[
  \{ \lambda_i(a), \lambda_i(b) \} = \{ r_i, r_{i+1} \}
\]
and each $\lambda_i(x)$ is compositionally generated by at most $M$
fundamental translations, and $M$ is minimal with this property. 

The class $\mathcal{C}$ is said to be of \emph{bounded Maltsev depth} if
there is some $M$ such that $\mathcal{C}$ has Maltsev depth $M$.
\end{defn}
%==========================================================================}}}

In the next lemma, the \emph{support} of $\alpha\in \A(\T)^n$ is
$\supp(\alpha) = \{ l\in [1,n] \mid \alpha(l)\neq 0 \}$.

%=========================================================================={{{
\begin{lem} \label{lem:produce_nonzeros}
Suppose that $r,s\in B_n$ are such that $r(1) \neq s(1) = 0$ and $r(i) =
s(i)$ for $i\geq 2$. If $g(x)$ is a fundamental translation such that
$g(r)\neq g(s)$, then $|\supp(g(r))| \leq |\supp(r)| + 1$.
\end{lem}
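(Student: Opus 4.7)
The plan is to run a case analysis on the fundamental operation that produces $g(x)$. First I would note that on $B_n$ the only fundamental operations of $\A(\T)$ that can be nonzero are $\wedge$, $J$, $J'$, and $S_2$: the ``activation'' clauses of the remaining operations ($S_0,S_1,T,I,\cdot$, and the $\mathcal{L},\mathcal{R},U_F^i$ families) require arguments in $U\cup V\cup W$, which never occur as coordinate values of elements of $B_n$. Thus $g(x)$ arises by fixing all but one slot in one of $\wedge,J,J',S_2$. Since operations act coordinatewise and $r(l)=s(l)$ for $l\geq 2$, we automatically have $g(r)(l)=g(s)(l)$ for every $l\geq 2$, so the hypothesis $g(r)\neq g(s)$ forces the disagreement to be at coordinate $1$, where $r(1)=D$ and $s(1)=0$.

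Next I would eliminate the easy cases. Because all coordinates of $u,v\in B_n$ at position $1$ lie in $\{0,D\}$ by Lemma \ref{lem:B_n_properties}(1), the condition ``$u=\partial v\in V\cup W$'' cannot hold at coordinate $1$, so $S_2$ can never yield $g(r)(1)\neq g(s)(1)$ and is ruled out entirely. For $J$ and $J'$ with $x$ in slot $1$ or $2$, the $0$-absorption observed in the lead-in to Lemma \ref{lem:(bn,cn)_not_in_(a,0)} gives the stronger inclusion $\supp(g(r))\subseteq\supp(r)$, which is more than enough. For $g(x)=J(u,v,x)$ the output $J(u(l),v(l),r(l))$ depends on $r(l)$ only through the clause $u(l)=\partial v(l)$, which fails at $l=1$ for the same reason as above, so $g(r)(1)=g(s)(1)$ and this slot is ruled out. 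After these reductions the only remaining case is $g(x)=J'(u,v,x)$ with $u(1)=v(1)=D$; in every other case the desired inequality is already immediate.

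In the surviving case I would bound $\supp(g(r))$ by decomposing it as the contribution from $\supp(r)$ (at most $|\supp(r)|$ coordinates) plus the ``new'' coordinates $l\geq 2$ with $r(l)=0$ but $g(r)(l)\neq 0$. A coordinate of the second type requires $u(l)=\partial v(l)$, which forces $(u(l),v(l))\in\{(\partial D,D),(D,\partial D)\}$. The heart of the argument, and the main obstacle, is showing that at most one such index $l$ exists: a naive appeal to Lemma \ref{lem:B_n_properties}(2) only gives two, one from each pattern, which would yield the weaker bound $|\supp(r)|+2$. The sharp bound needs part (3) as well. If $(u(l_1),v(l_1))=(\partial D,D)$ and $(u(l_2),v(l_2))=(D,\partial D)$ with $l_1<l_2$, then Lemma \ref{lem:B_n_properties}(3) applied to $u$ forces $u(l_2)=0$, contradicting $u(l_2)=D$; if $l_2<l_1$, the symmetric argument for $v$ applies; and $l_1=l_2$ is impossible since then $v(l_1)$ would equal both $D$ and $\partial D$. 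Hence the two patterns cannot both occur, at most one extra coordinate enters $\supp(g(r))$, and $|\supp(g(r))|\leq|\supp(r)|+1$ follows.
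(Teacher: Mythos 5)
Your proof is correct and follows essentially the same case analysis as the paper: rule out $S_2$ and the absorbing slots of $J,J'$ via Lemma \ref{lem:B_n_properties}(1) and $0$-absorption, rule out $J(\alpha,\beta,x)$ at coordinate $1$ via part (1), and then in the surviving case $J'(\alpha,\beta,x)$ show via parts (2) and (3) that at most one new coordinate can enter the support. The only (trivial) omission is that you list $\wedge$ among the candidate operations but never dispose of it; since $u\wedge v\leq u$, a $\wedge$-translation can only shrink support, so this gap is cosmetic. Your unpacking of the final step is slightly more explicit than the paper's compressed ``parts (2) and (3) then imply $k=l$'': you correctly observe that part (2) alone bounds each of the two patterns $(\partial D,D)$ and $(D,\partial D)$ separately (giving at most two new coordinates) and that part (3) is what excludes the two patterns coexisting. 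That is exactly what the paper's appeal to parts (2) and (3) encodes, so the two arguments are the same in substance.
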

\begin{proof}
The proof of this lemma is somewhat similar to the proof of Lemma
\ref{lem:(bn,cn)_not_in_(a,0)}. If $g(x)$ is a translation of
$S_2$, then from Lemma \ref{lem:B_n_properties} part $(1)$, the
definition of $S_2$, and the hypotheses concerning $r$ and $s$, we have that
$g(r) = g(s)$. If $g(x)$ is a translation of $\wedge$, then certainly
$|\supp(g(r))| \leq |\supp(r)|$.

If
\[
  g(x) \in \{ J(x, \alpha, \beta), J(\alpha, x,\beta), J'(x, \alpha, \beta),
  J'(\alpha, x, \beta) \},
\]
then since $J$ and $J'$ are $0$-absorbing in their first and second
variables, $|\supp(g(r))| \leq |\supp(r)|$. If $g(x) = J(\alpha,\beta,x)$,
then from the hypotheses concerning $r$ and $s$, and the definition of $J$,
$g(r)\neq g(s)$ implies $\alpha(1) = \partial \beta(1)$, contradicting Lemma
\ref{lem:B_n_properties} part $(1)$.

The last remaining case is $g(x) = J'(\alpha,\beta,x)$. Let $r' = g(r)$ and
suppose that there are $k,l$ such that $r(k) = r(l) = 0$ but $r'(k) \neq 0
\neq r'(l)$. From the definition of $J'$, this implies that $\alpha(k) =
\partial \beta(k)$ and $\alpha(l) = \partial \beta(l)$. Lemma
\ref{lem:B_n_properties} parts $(2)$ and $(3)$ then imply that $k = l$, so
it follows that $|\supp(g(r))| \leq |\supp(r)| + 1$.
\end{proof}
%==========================================================================}}}

%=========================================================================={{{
\begin{thm} \label{thm:not_bounded_Matlsev}
$\V(\A(\T))$ does not have bounded Maltsev depth.
\end{thm}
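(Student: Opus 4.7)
The plan is to mirror the structure of the DPSC argument: use the same family $\B_n$ and the atomic congruence $\Cg^{\B_n}(a,0)$, and show that any Maltsev chain for the pair $(b_n,c_n)$ using $(a,0)$ as its generating pair must employ a unary polynomial compositionally built from at least $n-1$ fundamental translations, which grows without bound as $n\to \infty$.

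The heart of the argument is the following inductive claim: if $\lambda(x)$ is a unary polynomial of $\B_n$ compositionally generated by at most $k$ fundamental translations and $\lambda(a)\neq \lambda(0)$, then (i) $\lambda(a)$ and $\lambda(0)$ differ only at coordinate~$1$, (ii) $\lambda(a)(1)=D$ and $\lambda(0)(1)=0$, and (iii) $|\supp(\lambda(a))|\leq k+1$. The base case $\lambda(x)=x$ is immediate from $a(1)=D$ and $a(l)=0$ for $l\geq 2$. For the inductive step, I would write $\lambda = g\circ \mu$ with $\mu$ generated by at most $k-1$ translations and $g$ a single fundamental translation; the assumption $\lambda(a)\neq \lambda(0)$ forces $\mu(a)\neq \mu(0)$, so the inductive hypothesis applies to $\mu$. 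Property~(i) is inherited because operations on $\A(\T)^n$ act coordinatewise, so $g(\mu(a))$ and $g(\mu(0))$ automatically agree wherever $\mu(a)$ and $\mu(0)$ agree. Property~(ii) follows from monotonicity of every fundamental operation of $\A(\T)$ together with the fact (from Lemma \ref{lem:B_n_properties}) that $\pi_1(B_n)=\{0,D\}$: the inductive hypothesis gives $\mu(a)(1) = D > 0 = \mu(0)(1)$, monotonicity forces $g(\mu(a))(1)\geq g(\mu(0))(1)$, and since both values lie in the two-element chain $\{0,D\}$ and must differ (else $\lambda(a)=\lambda(0)$), the larger is $D$ and the smaller $0$. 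Property~(iii) is exactly what Lemma \ref{lem:produce_nonzeros} delivers once (i) and (ii) of the inductive hypothesis for $\mu$ verify its hypotheses on the pair $(\mu(a),\mu(0))$.

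With the claim in hand, the theorem follows quickly. Assume for contradiction that $\V(\A(\T))$ has Maltsev depth $M$, and fix $n\geq M+2$. By Lemma \ref{lem:(bn,cn)_in_(a,0)} we have $(b_n,c_n)\in \Cg^{\B_n}(a,0)$, so there is a Maltsev chain $b_n = r_1, r_2, \ldots, r_m = c_n$ with unary polynomials $\lambda_1,\ldots,\lambda_{m-1}$ of $\B_n$, each compositionally generated by at most $M$ fundamental translations, satisfying $\{\lambda_i(a),\lambda_i(0)\} = \{r_i,r_{i+1}\}$. Examining $\lambda_1$: one of $\lambda_1(a),\lambda_1(0)$ equals $b_n$, and since $b_n(1) = D \neq 0$, property~(ii) of the claim rules out $\lambda_1(0) = b_n$. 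Hence $\lambda_1(a) = b_n$, and property~(iii) then yields $n = |\supp(b_n)| \leq M+1$, contradicting $n\geq M+2$.

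The main obstacle is making property~(ii) airtight in the inductive step, i.e., showing that the distinguished coordinate cannot ``flip orientation'' under a fundamental translation. The monotonicity observation handles this uniformly, but it depends crucially on $\pi_1(B_n) = \{0,D\}$ being a two-element chain; any potential case in which the behaviour at coordinate~$1$ could go awry (for instance, $g(x) = J(\alpha,\beta,x)$ with $\alpha(1) = \partial\beta(1)$) is already dispatched inside the proof of Lemma \ref{lem:produce_nonzeros}, which is why that lemma's conclusion is precisely what the induction needs.
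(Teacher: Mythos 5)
Your proof is correct and follows essentially the same route as the paper's: reduce to a single polynomial $\lambda$ in the Maltsev chain with $\lambda(a)=b_n\neq\lambda(0)$, and then iterate Lemma~\ref{lem:produce_nonzeros} along the composition of fundamental translations to bound the support growth, forcing at least $n-1$ translations. The only difference is cosmetic: you spell out as an explicit three-part inductive claim (coordinates $\geq 2$ agree, coordinate~$1$ is $D$ versus $0$, support grows by at most one per translation) what the paper leaves implicit when it says ``by applying Lemma~\ref{lem:produce_nonzeros} \ldots\ for each $i$,'' and you use monotonicity plus $\pi_1(B_n)=\{0,D\}$ to justify why the polynomial sends $a$ (not $0$) to $b_n$, which the paper asserts without comment. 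Both of these points are genuine details worth verifying, so your more careful write-up is a faithful filling-in of the same argument rather than a new one.
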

\begin{proof}
Since $(b_n,c_n)\in \Cg^{\B_n}(a,0)$, there is some polynomial $f(x)$
generated by fundamental translations such that $f(a) \neq f(0)$ and $f(a) =
b_n$. Say that $f(x) = f_m(f_{m-1}(\cdots f_1(x) \cdots ) )$ for fundamental
translations $f_i(x)$.

By applying Lemma \ref{lem:produce_nonzeros} with $g(x) = f_i(x)$, $r =
f_{i-1}(f_{i-2}(\cdots f_1(a) \cdots ) )$, and $s = f_{i-1}(f_{i-2}(\cdots
f_1(0)\cdots ) )$ for each $i$, we have that $m \geq n-1$. Hence $f(x)$ has
nesting depth at least $n-1$. Therefore $\B_n$ has Maltsev depth of at least
$n-1$, and since $\B_n\in \V(\A(\T))$ for all $n\in \ZZ_{\geq 2}$, it
follows that $\V(\A(\T))$ does not have bounded Maltsev depth.
\end{proof}
%==========================================================================}}}

Recall from the introduction that our goal in this paper has been to provide
negative answers to the following questions posed by Willard:
\begin{enumerate}
  \item if $\A$ is finite of finite type and $\V(\A)$ has finite residual
  bound and is congruence $\wedge$-semidistributive, is it true that
  $\V(\A)$ has definable principal subcongruences?

  \item if $\V$ is a congruence $\wedge$-semidistributive variety in a
  finite language and has finite residual bound, is it true that $\V$ has
  bounded Maltsev depth? 
\end{enumerate}
These questions are answered in Theorems \ref{thm:no_dpsc} and
\ref{thm:not_bounded_Matlsev}. Negative answers to these questions means
that neither DPSC nor bounded Maltsev depth will lead to a simplification of
Willard's Finite Basis Theorem, as was the case for Baker's Finite Basis
Theorem (see \cite{BakerWangDPSC} for DPSC and
\cite{BakerWangApproxDistLaws} for bounded Maltsev depth).

As mentioned in the introduction, the question of whether the algebra
$\A'(\T)$ has bounded Maltsev depth when $\T$ halts is unanswered, but an
approach involving a careful analysis of polynomials of $\A'(\T)$ would seem
to be necessary. A similar analysis showed that $\A'(\T)$ has DPSC if $\T$
halts, and it may be that the analysis for bounded Maltsev depth can build
on this without too much additional work.
%==========================================================================}}}1

%=========================================================================={{{1
\bibliographystyle{amsplain}
\bibliography{algebra-without-dpsc_references}
\begin{center}
  \rule{0.61803\textwidth}{0.1ex}   % 1/(golden ratio)
\end{center}
%==========================================================================}}}1
\end{document}